% Free group algebras in  division rings

\documentclass{amsart}

\usepackage[utf8]{inputenc}
\usepackage{amssymb,latexsym,enumerate,diagrams,color}

\newtheorem{theorem}{Theorem}[section]
\newtheorem{lemma}[theorem]{Lemma}
\newtheorem{corollary}[theorem]{Corollary}

\newcommand{\Spec}{\mathop{\rm Spec}}
\newcommand{\id}{\mathord{\rm Id}}

\newcommand{\unit}{{\mathord{\times}}}

\newcommand{\im}{\operatorname{im}}
\newcommand{\coker}{\operatorname{coker}}

\let\ideal\mathfrak
\let\tensor\otimes

\let\sheaf\mathcal


\begin{document}

\title{A proof of Grothendieck's base change theorem}

\author{E. Tengan}

\address{Department of Mathematics, ICMC, University of S\~{a}o Paulo,
  13566-590, Brazil}

\email{etengan@icmc.usp.br}

\begin{abstract}
  We give an elementary short proof of Grothendieck's base change
  theorem for the cohomology of flat coherent sheaves.
\end{abstract}

%\keywords{Cohomology of coherent sheaves, Base change}

\subjclass[2000]{Primary 14F99; Secondary 13D99}

\maketitle


\section{Introduction}

The purpose of this note is to give a short alternative proof of

\begin{theorem}[Grothendieck]\label{thm:main}
  Let $f\colon X \to Y$ be a proper map of noetherian schemes, and
  $\sheaf{F}$ be a coherent sheaf on $X$ which is flat over $Y$.  For
  $y\in Y$ let $X_y = X \times_Y \Spec \kappa(y)$ be the fiber of $y$,
  and $\sheaf{F}_y$ be the pullback of $\sheaf{F}$ to $X_y$.
  \begin{enumerate}[(a)]
  \item The base change map
    $$\varphi^p(y)\colon R^pf_* \sheaf{F} \tensor_{\sheaf{O}_Y}
    \kappa (y) \to H^p(X_y, \sheaf{F}_y)
    $$
    is surjective if and only if it is an isomorphism.

  \item Suppose that $\varphi^p(y)$ is surjective.  Then the following
    conditions are equivalent:
    \begin{enumerate}[(i)]
    \item $\varphi^{p-1}(y)$ is also surjective;
    \item $R^pf_* \sheaf{F}$ is a free sheaf in a neighborhood of $y$.
    \end{enumerate}
    Furthermore, if these conditions hold for all $y\in Y$, then the
    formation of $R^pf_* \sheaf{F}$ commutes with arbitrary base
    change.
  \end{enumerate}
\end{theorem}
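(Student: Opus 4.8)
The plan is to reduce the whole statement to linear algebra over a local ring by means of the \emph{fundamental complex}. Since both assertions are local on $Y$ and are compatible with localization, I would first assume $Y = \Spec A$ with $A$ noetherian and invoke the standard consequence of Grothendieck's finiteness machinery for proper morphisms: there is a bounded complex $K^\bullet$ of finite free $A$-modules together with functorial isomorphisms $H^p(X \times_Y \Spec B, \sheaf F_B) \cong H^p(K^\bullet \tensor_A B)$ for every $A$-algebra $B$. Flatness of $\sheaf F$ over $Y$ is exactly what guarantees such a complex exists and computes cohomology on all base changes. Under this identification, $R^pf_*\sheaf F$ becomes the $A$-module $H^p(K^\bullet)$ and the base change map $\varphi^p(y)$ becomes the natural comparison map $H^p(K^\bullet)\tensor_A \kappa(y) \to H^p(K^\bullet \tensor_A \kappa(y))$. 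Localizing $A$ at the prime corresponding to $y$, I may assume $A$ is local with maximal ideal $\ideal m$ and residue field $k=\kappa(y)$.

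The key device is the \emph{minimal complex}: over a local ring, a bounded complex of finite free modules splits as a complex as $K^\bullet_{\min}\oplus T^\bullet$, where every differential of $K^\bullet_{\min}$ has entries in $\ideal m$ and $T^\bullet$ is a direct sum of trivial complexes $\cdots \to 0 \to A \xrightarrow{\id} A \to 0 \to \cdots$. One obtains this by repeatedly using a unit entry of a differential to split off a copy of $A\xrightarrow{\id}A$ (Gaussian elimination), a process that terminates because the complex is bounded with finitely generated terms. The trivial summands become acyclic after applying $-\tensor_A M$ for any $M$ and contribute nothing to $H^p(K^\bullet)$, so I may replace $K^\bullet$ by $K^\bullet_{\min}$ and assume every differential reduces to zero modulo $\ideal m$. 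Consequently $H^p(K^\bullet \tensor_A k) = K^p \tensor_A k$.

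With this normalization the analysis is short. Writing $Z^p=\ker d^p$, a diagram chase identifies the image of $\varphi^p(k)$ with $(Z^p+\ideal m K^p)/\ideal m K^p$ inside $K^p\tensor_A k$; hence $\varphi^p(k)$ is surjective iff $Z^p+\ideal m K^p=K^p$, which by Nakayama forces $Z^p=K^p$, i.e.\ $d^p=0$. But $d^p=0$ makes $\varphi^p(k)$ the identity of $\coker(d^{p-1})\tensor_A k$, so surjectivity already yields an isomorphism, proving (a) (the reverse implication being trivial). For (b), surjectivity of $\varphi^p(k)$ gives $d^p=0$, whence $R^pf_*\sheaf F=H^p(K^\bullet)=\coker(d^{p-1})$. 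By (a) applied in degree $p-1$, condition (i) is equivalent to $d^{p-1}=0$, which immediately makes $\coker(d^{p-1})=K^p$ free, giving (ii). Conversely, if $\coker(d^{p-1})$ is free, then right exactness computes its rank as $\dim_k(K^p\tensor_A k)=\rank K^p$, so the surjection $K^p \onto \coker(d^{p-1})$ is a surjective map of free modules of equal rank, hence an isomorphism; thus $\im d^{p-1}=0$ and (i) holds. Finally, if (i) and (ii) hold at every $y$, then after minimalizing at each prime both $d^{p-1}$ and $d^p$ vanish, the degree-$p$ term becomes an isolated free summand, and $\varphi^p(M)\colon H^p(K^\bullet)\tensor_A M \to H^p(K^\bullet\tensor_A M)$ is an isomorphism for every $M$; since isomorphy of a map of modules can be checked at all primes, this proves that the formation of $R^pf_*\sheaf F$ commutes with arbitrary base change. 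I expect the genuine obstacle to lie not in this linear algebra but in setting up the fundamental complex $K^\bullet$ and verifying that $\varphi^p(y)$ truly corresponds to the algebraic comparison map; once that reduction is secured, everything follows from Nakayama and the minimal-complex splitting.
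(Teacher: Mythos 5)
Your proposal is correct, and while it shares the overall strategy of the paper---reduce to the Grothendieck complex of finite free modules over a local ring, then apply linear algebra and Nakayama---the key lemma you use is genuinely different. The paper normalizes a \emph{single} differential: it shows that any map $d\colon F\to F'$ of finite free modules admits bases in which $d=\left(\begin{smallmatrix} M & \id\\ N & 0\end{smallmatrix}\right)$ with the entries of $M,N$ in $\ideal{m}$, writes down the base change map explicitly as a map of subquotients of $F^p$, and reads off surjectivity of $\varphi^p$ as the condition $N=0$ via Nakayama. You instead minimalize the \emph{whole complex} at once, splitting off trivial summands $A\to A$ by Gaussian elimination; this requires the additional (standard, but not free) verification that each such splitting is compatible with the adjacent differentials, which follows from $d\circ d=0$. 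What you buy is transparency: on the minimal model $H^p(K^\bullet\tensor_A k)=K^p\tensor_A k$, surjectivity of $\varphi^p$ becomes literally $d^p=0$, part (a) is immediate, and the ``commutes with arbitrary base change'' clause falls out because the degree-$p$ term becomes an isolated free summand at every prime (together with the observation that an isomorphism of $B$-modules may be checked at the primes of $B$ and that localization commutes with cohomology). The paper's route avoids the complex-level splitting lemma entirely and is in that sense more self-contained, at the cost of a slightly longer matrix computation. Two small points you should make explicit in a final write-up: the step ``$\coker(d^{p-1})$ free $\Rightarrow d^{p-1}=0$'' uses that a surjection of finite free modules of equal rank over a commutative ring is an isomorphism, with the rank computed via right-exactness of $\tensor_A k$ and the minimality hypothesis $d^{p-1}\equiv 0 \pmod{\ideal{m}}$; and the passage from ``the stalk $H^p(K^\bullet)_{\ideal{p}}$ is free'' to ``$R^pf_*\sheaf{F}$ is free in a neighborhood of $y$'' uses coherence. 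Neither is a gap.
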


The traditional proofs found in \cite{Hartshorne} (theorem 12.11,
p.290) and \cite{EGAIIIb} (théorème 7.7.5, p.67, proposition 7.7.10,
p.71, proposition 7.8.4, p.73) rely on either the formal functions
theorem or completion methods (in the spirit of the proof of the local
criterion of flatness).  On the other hand, Mumford \cite{Mumford}
(\S5, p.46) has given streamlined proofs of all the main results in
cohomology of base change except for above one.  Mumford's methods can
be readily adapted to prove theorem~\ref{thm:main} as well, and in a
quite elementary fashion.  Surprisingly, I could not find any written
account thereof; that is the reason why I decided to write one.


\section{Linear algebra over local rings}

All proofs of results in cohomology of base change are based on the
following key technical result (see \cite{Mumford}, \S 5, p.46):

%\goodbreak

\begin{theorem}[The Grothendieck complex\footnote{No, this is not any disorder suffered by those who unsuccesfully tried to learn
    scheme theory}]\label{thm:grothendieck}
  Let $f\colon X \to \Spec A$ be a proper map of noetherian schemes,
  and $\sheaf{F}$ be a coherent sheaf on $X$ which is $A$-flat.  There
  exists a finite complex $(F^\bullet, d^\bullet)$ of finitely
  generated $A$-flat modules such that for any $A$-algebra $B$ we have
  an isomorphism, functorial in $B$,
  $$H^p(X \tensor_A B, \sheaf{F} \tensor_A B) = H^p(F^\bullet \tensor_A B)
  $$
\end{theorem}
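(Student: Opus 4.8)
The plan is to compute both sides by \v{C}ech cohomology and then to replace the resulting complex of flat but possibly infinitely generated modules by a quasi-isomorphic \emph{finite} complex of finitely generated flats, in a way that survives base change. First I would fix a finite affine open cover $\mathcal{U}=\{U_i\}_{i=1}^r$ of $X$: this exists because $X$ is noetherian, and since $f$ is proper, hence separated, every finite intersection $U_I=\bigcap_{i\in I}U_i$ is again affine. Let $C^\bullet=C^\bullet(\mathcal{U},\sheaf{F})$ be the associated alternating \v{C}ech complex, so $C^p=\prod_{|I|=p+1}\sheaf{F}(U_I)$. Each $\sheaf{F}(U_I)$ is $A$-flat because $\sheaf{F}$ is $A$-flat and $U_I$ is affine, and the product is finite, so every $C^p$ is a flat $A$-module and $C^\bullet$ is a finite complex. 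As $X$ is separated and the cover affine, $H^p(C^\bullet)=H^p(X,\sheaf{F})$, and the essential point is that this persists under base change: for any $A$-algebra $B$ the cover pulls back to a finite affine cover of the separated scheme $X\tensor_A B$, and taking sections of a quasi-coherent sheaf over an affine commutes with $\tensor_A B$, so $C^\bullet\tensor_A B$ is exactly the \v{C}ech complex of $\sheaf{F}\tensor_A B$. Hence $H^p(X\tensor_A B,\sheaf{F}\tensor_A B)=H^p(C^\bullet\tensor_A B)$ functorially in $B$, and it suffices to produce a finite complex $F^\bullet$ of finitely generated flats together with an isomorphism $H^p(F^\bullet\tensor_A B)\cong H^p(C^\bullet\tensor_A B)$ functorial in $B$.

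The engine for this is the elementary observation that a bounded-above acyclic complex of flat $A$-modules remains acyclic after $\tensor_A B$ for \emph{every} $B$: descending induction from the top degree shows that all its cocycle modules are flat, so it splits into short exact sequences of flats, each of which stays exact after tensoring. Consequently, if $u\colon F^\bullet\to C^\bullet$ is any quasi-isomorphism with $F^\bullet$ bounded above and consisting of flat modules, its mapping cone is a bounded-above acyclic complex of flats, whence $u\tensor_A B$ is again a quasi-isomorphism for all $B$. So I only need such a $u$ with $F^\bullet$ built from finitely generated frees. This is the classical construction: the coherence theorem for proper maps gives that $H^p(C^\bullet)=H^p(X,\sheaf{F})$ is finitely generated over the noetherian ring $A$, and a standard descending induction then produces a bounded-above complex of finitely generated free modules mapping quasi-isomorphically to $C^\bullet$, finite generation at each stage being guaranteed by noetherianity.

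It remains to cut $F^\bullet$ down to a finite complex of finitely generated flats. Since $C^\bullet$ is concentrated in degrees $[0,n]$ one has $H^p(F^\bullet)=0$ for $p<0$, so the good truncation $G^\bullet=\tau_{\ge 0}F^\bullet$, which replaces $F^0$ by $\coker(F^{-1}\to F^0)$ and discards the lower terms, is still quasi-isomorphic to $F^\bullet$ via a map whose cone is again a bounded-above acyclic complex of flats, hence base-change stable. The terms $G^1,\dots,G^n$ are finitely generated free; the only point to check is that the new bottom term $G^0=\coker(F^{-1}\to F^0)$ is flat. But the flat resolution $\cdots\to F^{-1}\to F^0\onto G^0$ computes $\operatorname{Tor}^A_i(G^0,B)=H^{-i}(F^\bullet\tensor_A B)$ for $i\ge 1$, and the right-hand side vanishes because $-i<0$ lies below the support of $C^\bullet$ while $u\tensor_A B$ is a quasi-isomorphism. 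Thus $G^0$ is flat, and being a quotient of $F^0$ it is finitely generated; renaming $G^\bullet$ as $F^\bullet$ yields the required finite complex.

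The main obstacle is exactly this interplay between flatness and base change concentrated in the truncation step: one must know \emph{a priori} that the quasi-isomorphism survives \emph{arbitrary} $\tensor_A B$ in order to conclude that the cut-off module $G^0$ is flat, and it is the lemma on acyclic complexes of flats that makes this argument non-circular. By contrast, the construction of the finite free resolution $u$ and the verification that \v{C}ech cohomology commutes with base change are comparatively routine; the conceptual weight sits in the single fact that boundedness above together with flatness forces acyclicity to be preserved under all base changes.
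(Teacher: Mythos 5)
The paper does not actually prove this theorem itself---it states it and cites Mumford, \S 5---and your argument is essentially the one given there (and in EGA III): the \v{C}ech complex of a finite affine cover as a finite complex of flat modules computing cohomology compatibly with base change, the lemma that a bounded-above acyclic complex of flats stays acyclic under any $\tensor_A B$, a quasi-isomorphic bounded-above complex of finitely generated free modules supplied by coherence of $H^p(X,\sheaf{F})$ over the noetherian ring $A$, and truncation with a Tor computation to see that the bottom term is flat (and note that vanishing of $\operatorname{Tor}_1^A(G^0,B)$ for all $A$-algebras $B$ does suffice for flatness, since it covers $B=A/I$). The proof is correct; the one presentational slip is in the truncation step, where you assert that the cone of $F^\bullet \to \tau_{\ge 0}F^\bullet$ is a complex of flats before you have established that $G^0$ is flat---simply run the Tor-vanishing argument first (it depends only on $u\tensor_A B$ being a quasi-isomorphism, not on any property of the truncation) and then conclude.
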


To prove theorem~\ref{thm:main}, since the question is local on $Y$
and all schemes involved are noetherian, we may assume that $Y = \Spec
A$ where $(A, \ideal{m}, k)$ is a local noetherian ring.  In
particular, the modules $F^p$ in the Grothendieck complex will all be
free of finite rank (recall that over a noetherian local ring, a
finitely generated module is flat if and only if it is projective if
and only if it is free, see \cite{Matsumura}, theorem 7.12, p.52).  We
may now write the base change map as
$$\varphi^p = \varphi^p(\ideal{m}) \colon H^p(F^\bullet)\tensor_A k \to H^p(F^\bullet\tensor_A k)$$
which reduces everything to proving the ``linear algebra'' lemmas
below.

We first make a simple remark regarding bases of a free module $F$ of
finite rank $n$ over a local ring $(A, \ideal{m}, k)$.  Denoting by a
bar the reduction modulo $\ideal{m}$,
$$e_1, \ldots, e_n \in F \text{ is an $A$-basis}
\iff \overline e_1, \ldots, \overline e_n \in
   F\tensor_A k = F/\ideal{m}F \text{ is a $k$-basis}
$$
In fact, clearly $F = \bigoplus_{1\le i\le n} Ae_i \implies F \tensor
k = \bigoplus_{1\le i\le n} k \overline e_i$; conversely, writing the
$e_i = \sum_{1\le j\le n} a_{ij} f_j$ ($a_{ij} \in A$) in terms of an
$A$-basis $f_j$ of $F$, we have that the matrix $(\overline a_{ij})$
is invertible since both the $\overline e_i$ and the $\overline f_i$
form $k$-bases of $F/\ideal{m}F$, hence $\det (\overline a_{ij})\ne
\overline 0 \in k \iff \det (a_{ij}) \in A^\unit$, showing that
$(a_{ij})$ is also invertible, and therefore the $e_i$ also form an
$A$-basis of $F$.

\begin{lemma}
  Let $(A, \ideal{m}, k)$ be a local ring, and $d\colon F \to F'$ be a
  map of free $A$-modules of finite rank.  There exist decompositions
  $$F = V \oplus W \qquad F' = W' \oplus U$$
  such that $d(V) \subseteq \ideal{m} F'$ and $d$ restricts to an
  isomorphism $d\colon W \stackrel{\approx}{\to} W'$.  In other words,
  there exist $A$-bases of $F$ and $F'$ with respect to which
  $$d = \begin{pmatrix}
    M_{s\times r} & \id_{s\times s}\\
    N_{t\times r} & 0_{t\times s}
  \end{pmatrix}
  $$
  with all entries of $M$, $N$ belonging to $\ideal{m}$.
\end{lemma}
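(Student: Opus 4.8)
The plan is to pass to the residue field, carry out the decomposition there by elementary linear algebra, and then lift everything back to $A$ using the bar/lifting principle recorded in the remark preceding the lemma. Write $\bar d\colon F/\ideal{m}F \to F'/\ideal{m}F'$ for the reduction of $d$. This is a $k$-linear map of finite-dimensional vector spaces, so it already admits the desired decomposition over $k$: set $\bar V = \ker \bar d$ and choose a complement $\bar W$ with $F/\ideal{m}F = \bar V \oplus \bar W$, and set $\bar W' = \im \bar d$ and choose a complement $\bar U$ with $F'/\ideal{m}F' = \bar W' \oplus \bar U$. Then $\bar d$ restricts to an isomorphism $\bar W \stackrel{\approx}{\to} \bar W'$, and the whole task is to promote these $k$-subspaces to $A$-summands.

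Next I would lift these data. Pick $w_1,\dots,w_s \in F$ whose reductions form a $k$-basis of $\bar W$, and $v_1,\dots,v_r \in F$ whose reductions form a $k$-basis of $\bar V = \ker \bar d$. Since together their reductions form a $k$-basis of $F/\ideal{m}F$, the remark guarantees that $w_1,\dots,w_s,v_1,\dots,v_r$ is an $A$-basis of $F$; set $W = \bigoplus_i A w_i$ and $V = \bigoplus_j A v_j$. For $F'$ I would \emph{not} lift an abstract basis but instead use the images $w_i' := d(w_i)$: their reductions are $\bar d(\bar w_i)$, which form a $k$-basis of $\im \bar d = \bar W'$ because $\bar d|_{\bar W}$ is injective. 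Lifting in addition elements $u_1,\dots,u_t \in F'$ whose reductions form a $k$-basis of $\bar U$, the remark again shows that $w_1',\dots,w_s',u_1,\dots,u_t$ is an $A$-basis of $F'$; set $W' = \bigoplus_i A w_i'$ and $U = \bigoplus_l A u_l$.

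It then remains to check the two required properties, both of which are now immediate. Since $\overline{d(v_j)} = \bar d(\bar v_j) = 0$, each $d(v_j)$ lies in $\ideal{m}F'$, hence $d(V) \subseteq \ideal{m}F'$. By construction $d$ carries the basis $\{w_i\}$ of $W$ to the basis $\{w_i'\}$ of $W'$, so $d|_W\colon W \to W'$ is an isomorphism. Finally, reading $d$ off in the ordered bases $v_1,\dots,v_r,w_1,\dots,w_s$ of $F$ and $w_1',\dots,w_s',u_1,\dots,u_t$ of $F'$ yields exactly the displayed block form: the $w_i$-columns give $\begin{pmatrix}\id_{s\times s}\\ 0_{t\times s}\end{pmatrix}$, while expanding $d(v_j) = \sum_i m_{ij}w_i' + \sum_l n_{lj}u_l$ produces the blocks $M$ and $N$, whose entries lie in $\ideal{m}$ precisely because $d(v_j)\in\ideal{m}F'$.

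The whole argument is essentially routine once the lifting remark is in hand; the only point I would flag as carrying the genuine content is the decision to take $w_i' = d(w_i)$ as honest basis vectors of $F'$ rather than lifting an arbitrary basis of $\bar W'$. This is what forces the clean identity block and makes $d|_W$ an isomorphism on the nose; had I instead lifted an independent basis of $\bar W'$, the restriction $d|_W$ would only be an isomorphism modulo $\ideal{m}$, and I would have to invoke the remark a second time to correct it.
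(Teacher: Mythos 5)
Your proof is correct and follows essentially the same route as the paper: reduce modulo $\ideal{m}$, split off $\ker(d\tensor 1)$ and $\im(d\tensor 1)$, lift using the basis remark, and—crucially, just as in the paper, where $f_i = d(e_{r+i})$—take the actual images $d(w_i)$ as part of the basis of $F'$ so that $d|_W$ is an isomorphism on the nose. No gaps; the point you flag as the genuine content is exactly the one the paper's proof exploits.
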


\begin{proof}
  Consider the $k$-linear map $d\tensor 1\colon F\tensor_A k \to
  F'\tensor_A k$, and choose $e_1, \ldots, e_{r+s}\in F$ so that
  $\overline e_1, \ldots, \overline e_{r+s}$ is a $k$-basis of
  $F\tensor_A k$, with the first $r$ vectors generating $\ker
  (d\tensor 1)$.  Hence $\overline{d(e_{r+1})}, \ldots,
  \overline{d(e_{r+s})} \in F'\tensor_A k = F'/\ideal{m}F'$ is a
  $k$-basis of $\im (d\tensor 1)$.  By the above remark, the $e_i$ form an
  $A$-basis of $F$, and we may find an $A$-basis $f_1, \ldots,
  f_{s+t}$ of $F'$ with $f_1 = d(e_{r+1}), \ldots, f_s = d(e_{r+s})$.
  Now set
  \begin{align*}
    V &= A e_1 \oplus \cdots \oplus A e_r&
    W &= A e_{r+1} \oplus \cdots \oplus A e_{r+s}\\
    W' &= A f_1 \oplus \cdots \oplus A f_s&
    U &= A f_{s+1} \oplus \cdots \oplus A f_{s+t}
  \end{align*}
  and we are done.
\end{proof}

\goodbreak

\begin{lemma}
  In the notation above, the following conditions are equivalent:
  \begin{enumerate}[(i)]
  \item $\varphi^p$ is an isomorphism;
  \item $\varphi^p$ is surjective;
  \item $d^p$ can be put in matrix form
    $$d^p = \begin{pmatrix}
      0& \id\\
      0& 0
    \end{pmatrix}
    $$
    for some choice of $A$-bases of $F^p$ and $F^{p+1}$.
  \item $\ker d^p$ and $\im d^p$ are direct summands of $F^p$ and
    $F^{p+1}$, respectively (in particular, they are free since $A$ is
    local noetherian).
  \end{enumerate}
\end{lemma}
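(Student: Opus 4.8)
The plan is to establish the cycle (i) $\Rightarrow$ (ii) $\Rightarrow$ (iii) $\Rightarrow$ (i) together with the separate equivalence (iii) $\iff$ (iv), the implication (i) $\Rightarrow$ (ii) being trivial. Throughout I would abbreviate $Z^p = \ker d^p$ and $B^p = \im d^{p-1}$, so that $H^p(F^\bullet) = Z^p/B^p$, and write $Z^p_k = \ker(d^p \tensor 1)$, $B^p_k = \im(d^{p-1}\tensor 1)$ and $H^p_k = H^p(F^\bullet \tensor_A k) = Z^p_k/B^p_k$ for the reduced complex. First I would apply the previous lemma to $d^p$ and then clear the block $M$ by column operations (subtracting suitable $W$-columns from the $V$-columns), so as to assume $d^p = \left(\begin{smallmatrix} 0 & \id \\ N & 0\end{smallmatrix}\right)$ with respect to $F^p = V \oplus W$ and $F^{p+1} = W' \oplus U$, where $N\colon V \to U$ has all entries in $\ideal{m}$; condition (iii) then becomes simply $N = 0$.

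The heart of the argument is to reinterpret the surjectivity of $\varphi^p$. Since $Z^p \onto H^p(F^\bullet)$ and $-\tensor_A k$ is right exact, I get $Z^p\tensor_A k \onto H^p(F^\bullet)\tensor_A k$, so that $\varphi^p$ is surjective if and only if the composite $Z^p \tensor_A k \to Z^p_k \onto H^p_k$ is, i.e. if and only if $\im(Z^p\tensor_A k \to Z^p_k) + B^p_k = Z^p_k$. The key observation I would exploit is that $d^{p-1}$ factors through $Z^p = \ker d^p$, whence $B^p_k$ is already contained in $\im(Z^p\tensor_A k \to Z^p_k)$; the displayed condition therefore collapses to the single requirement that $Z^p\tensor_A k \to Z^p_k$ be surjective. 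I expect this containment to be the main obstacle: it is precisely what prevents the classes coming from $B^p_k$ from spuriously enlarging the image, so that without it a naive dimension count would fail to force (iii).

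With this reduction in hand the rest is computation. Reducing the normal form mod $\ideal{m}$ gives $d^p\tensor 1 = \left(\begin{smallmatrix} 0 & \id \\ 0 & 0\end{smallmatrix}\right)$, so $Z^p_k = V\tensor_A k$, while $Z^p = \ker N \subseteq V$; the map $Z^p\tensor_A k \to Z^p_k$ is then induced by the inclusion $\ker N \into V$ and has image $(\ker N + \ideal{m}V)/\ideal{m}V$ inside $V/\ideal{m}V$. By Nakayama this is all of $V\tensor_A k$ if and only if $\ker N = V$, that is $N = 0$, which yields (ii) $\iff$ (iii). For (iii) $\iff$ (iv) I would argue directly: given splittings $F^p = \ker d^p \oplus C$ and $F^{p+1} = \im d^p \oplus U$ with $d^p\colon C \stackrel{\approx}{\to}\im d^p$, bases adapted to them put $d^p$ in the form of (iii), and conversely the matrix in (iii) exhibits $\ker d^p$ and $\im d^p$ as evident direct summands.

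Finally I would close the cycle by upgrading surjectivity to an isomorphism, i.e. proving (iii) $\Rightarrow$ (i). When $N = 0$ one has $Z^p = V$, and $Z^p\tensor_A k \to Z^p_k$ is the identity of $V\tensor_A k$; since $d^{p-1}$ factors through $V$, the image of $B^p\tensor_A k$ in $V\tensor_A k$ is exactly $B^p_k$, so the induced map on quotients $H^p(F^\bullet)\tensor_A k = (V\tensor_A k)/B^p_k \to Z^p_k/B^p_k = H^p_k$ is again the identity, hence an isomorphism. This would complete the proof, the only genuinely non-formal input being the containment $B^p_k \subseteq \im(Z^p\tensor_A k \to Z^p_k)$ of the second paragraph; everything else is the normal form of the previous lemma together with Nakayama's lemma.
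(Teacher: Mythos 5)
Your proposal is correct and follows essentially the same route as the paper: both reduce everything to the normal form of the previous lemma and the Nakayama step $\ker N + \ideal{m}V = V \iff N = 0$, your criterion ``$\ker d^p \tensor_A k \to \ker(d^p\tensor 1)$ is surjective'' being exactly the paper's condition $(**)$, $\ker d^p + \ideal{m}F^p = (d^p)^{-1}(\ideal{m}F^{p+1})$, derived via right-exactness and the factoring of $d^{p-1}$ through $\ker d^p$ instead of the explicit formula $(*)$ for $\varphi^p$. The only cosmetic difference is that you clear the block $M$ by a change of basis at the outset, whereas the paper performs that same column operation at the very end.
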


\begin{proof}
  Clearly $(i) \Rightarrow (ii)$ and $(iii) \Leftrightarrow (iv)$.
  Next, observe that $(F^\bullet\tensor_A k, d^\bullet \tensor 1)$ can
  be written as
  $$\cdots \rTo \frac{F^{p-1}}{\ideal{m} F^{p-1}} \rTo^{\overline d^{p-1}}
  \frac{F^{p}}{\ideal{m} F^{p}}\rTo^{\overline d^{p}}
  \frac{F^{p+1}}{\ideal{m} F^{p+1}}\rTo \cdots
  $$
  and the base change map $\varphi^p \colon H^p(F^\bullet)\tensor_A k
  \to H^p(F^\bullet\tensor_A k)$ as the natural map
  $$\varphi^p\colon \frac{\ker d^p}{\im d^{p-1} + \ideal{m} \ker d^p}
  \to \frac{(d^p)^{-1} (\ideal{m} F^{p+1})}{\im d^{p-1} + \ideal{m} F^p}
  \eqno{(*)}
  $$
  This shows that $(iii) \Rightarrow (i)$: if $F^p = V \oplus W$ is
  the corresponding decomposition in $(iii)$ with $V = \ker d^p$,
  we have
  $$\frac{(d^p)^{-1} (\ideal{m} F^{p+1})}{\im d^{p-1} + \ideal{m} F^p}
  = \frac{V \oplus \ideal{m}W}{\im d^{p-1} + \ideal{m}V \oplus \ideal{m}W}
  = \frac{V}{\im d^{p-1} + \ideal{m} V}
  $$
  since $\im d^{p-1} \subseteq V = \ker d^p$, and thus $\varphi^p$ is
  an isomorphism.

  Finally, to prove that $(ii)\Rightarrow (iii)$, notice first that
  from $(*)$ we get
  $$\varphi^p \text{ is surjective} \iff \ker d^p + \ideal{m} F^p =
  (d^p)^{-1} (\ideal{m} F^{p+1})
  \eqno{(**)}
  $$
  Now applying the previous lemma to $d^p$, there are decompositions
  $F^p = V \oplus W$ and $F^{p+1} = W' \oplus U$ with respect to which
  $d^p$ has matrix
  $$d^p = \begin{pmatrix}
    M & \id\\
    N & 0
    \end{pmatrix}
  $$
  where all entries of $M$ and $N$ belong to $\ideal{m}$.  Therefore
  $$(d^p)^{-1} (\ideal{m} F^{p+1}) = V \oplus \ideal{m}W
  \qquad\text{and}\qquad
  \ker d^p = \{ (v, -Mv) \in V\oplus W \mid Nv = 0 \}
  $$
  and the right hand side of $(**)$ becomes
  $$\ker N + \ideal{m}V = V  \stackrel{\text{Nakayama}}{\iff} \ker N = V
  \iff N = 0
  $$
  But now $\ker d^p = \{ (v, -Mv) \in V \oplus W \mid v\in V\}$ is a
  free summand of $F^p$, and a final change of basis puts $d^p$ in the
  desired format: just right multiply it by the following invertible
  matrix, whose columns on the left form a basis of $\ker d^p$.
  $$\begin{pmatrix}
    \id & 0\\
    -M & \id
    \end{pmatrix}
  $$
\end{proof}

\begin{lemma}
  Suppose that $\varphi^p$ is surjective.  Then the following are
  equivalent:
  \begin{enumerate}[(i)]
  \item $\varphi^{p-1}$ is surjective;
  \item $H^p(F^\bullet)$ is free.
  \end{enumerate}
  Furthermore, if these conditions hold, then $H^p(F^\bullet)
  \tensor_A B = H^p(F^\bullet\tensor_A B)$ for any $A$-algebra $B$.
\end{lemma}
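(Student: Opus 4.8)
The plan is to deduce everything from the previous lemma, applied both at index $p$ (where the hypothesis that $\varphi^p$ is surjective is in force) and at index $p-1$. Since $\varphi^p$ is surjective, condition (iv) of that lemma shows $\ker d^p$ is a direct summand of $F^p$, hence free; applied at $p-1$, the same lemma shows $\varphi^{p-1}$ is surjective if and only if $\im d^{p-1}$ is a direct summand of $F^p$. The whole argument then turns on comparing the nested submodules $\im d^{p-1}\subseteq \ker d^p \subseteq F^p$.

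For the equivalence (i)$\iff$(ii) I would consider the short exact sequence
$$0 \to \im d^{p-1} \to \ker d^p \to H^p(F^\bullet) \to 0.$$
Because $\ker d^p$ is free, $H^p(F^\bullet)$ is free (equivalently projective, being finitely generated over the local noetherian ring $A$) precisely when this sequence splits, i.e. when $\im d^{p-1}$ is a direct summand of $\ker d^p$. The key elementary observation is that, since $\ker d^p$ is itself a direct summand of $F^p$, the submodule $\im d^{p-1}$ is a direct summand of $\ker d^p$ if and only if it is a direct summand of $F^p$: one direction restricts a retraction $F^p \to \im d^{p-1}$ to $\ker d^p$, the other composes retractions $F^p \to \ker d^p \to \im d^{p-1}$. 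Combining this with the lemma's criterion for $\varphi^{p-1}$ yields $H^p(F^\bullet)$ free $\iff$ $\im d^{p-1}$ a direct summand of $F^p$ $\iff$ $\varphi^{p-1}$ surjective.

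For the base change statement I would assume (i) and (ii) hold, so that $\im d^{p-1}$, $\ker d^p$ and $\im d^p$ are all direct summands of the respective free modules. The point is that a split short exact sequence stays split exact after applying $-\tensor_A B$. Thus from $F^p = \ker d^p \oplus W$ with $d^p$ mapping $W$ isomorphically onto its image one obtains $\ker(d^p \tensor 1) = (\ker d^p)\tensor_A B$, and from the split inclusion $\im d^{p-1}\into F^p$ together with the surjection $F^{p-1}\onto \im d^{p-1}$ one obtains $\im(d^{p-1}\tensor 1) = (\im d^{p-1})\tensor_A B$, realized as a summand of $F^p \tensor_A B$. Writing $\ker d^p = \im d^{p-1}\oplus C$ with $C \cong H^p(F^\bullet)$ then gives
$$H^p(F^\bullet \tensor_A B) = \frac{(\ker d^p)\tensor_A B}{(\im d^{p-1})\tensor_A B} = C \tensor_A B = H^p(F^\bullet)\tensor_A B,$$
and one checks this identification is the natural base change map.

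The step I expect to be the main obstacle is this last computation, namely justifying that forming kernels and images commutes with $-\tensor_A B$. This fails for an arbitrary complex; the whole force of the surjectivity hypotheses is to guarantee that the relevant submodules are direct summands, so that the pertinent short exact sequences split and survive tensoring. Once that splitting is secured, everything reduces to routine bookkeeping.
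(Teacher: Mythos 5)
Your argument is correct and follows essentially the same route as the paper: both proofs reduce everything to the direct-summand criterion of the previous lemma and to the splitting of the short exact sequence relating $\im d^{p-1}$, $\ker d^p$ and $H^p(F^\bullet)$ (the paper phrases this by replacing $F^p$ with the free module $\ker d^p$, whereas you keep the ambient $F^p$ and transfer summands explicitly), and both get the base change statement by tensoring the resulting split sequences. The only point worth spelling out is that condition (iv) of the previous lemma at index $p-1$ also requires $\ker d^{p-1}$ to be a direct summand of $F^{p-1}$, but this is automatic once $\im d^{p-1}$ is a summand of the free module $F^p$ (it is then projective, so the surjection $F^{p-1}\onto\im d^{p-1}$ splits), so no gap results.
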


\begin{proof}
  Since $\varphi^p$ is surjective, by the previous lemma $\ker d^p$ is
  free, hence replacing $F^p$ by $\ker d^p$ we may assume that $d^{p}
  = 0$, and we have exact sequences
  \begin{align*}
    0&\rTo \im d^{p-1} \rTo F^p \rTo H^p(F^\bullet)
    \rTo 0\\
    0&\rTo \ker d^{p-1} \rTo F^{p-1} \rTo^{d^{p-1}} \im d^{p-1} \rTo 0
  \end{align*}
  Now if $\varphi^{p-1}$ is surjective, then by the previous lemma
  $H^p(F^\bullet) = F^p/\im d^{p-1}$ is free. Conversely, suppose
  $H^p(F^\bullet)$ is free.  Then the first sequence splits, showing
  that $\im d^{p-1}$ is free, and thus the second sequence splits as
  well.  Therefore both $\ker d^{p-1}$ and $\im d^{p-1}$ are direct
  summands of $F^{p-1}$ and $F^p$ respectively, and by the previous
  lemma $\varphi^{p-1}$ is surjective.

  Finally, $H^p(F^\bullet) \tensor_A B = H^p(F^\bullet\tensor_A B)\iff
  (\coker d^{p-1}) \tensor_A B = \coker (d^{p-1} \tensor 1)$ also
  directly follows from the matrix form of $d^{p-1}$ in $(iii)$ of the
  previous lemma.
\end{proof}


\section{A useful corollary}

For completion, we include one of the main applications of
theorem~\ref{thm:main}, namely the following corollary, ``which is
extremely useful, but which is unfortunately buried there [in EGA III]
in a mass of generalizations'' (\cite{MumfordFogarty}, p.19).

\goodbreak

\begin{corollary}
  Let $f\colon X \to Y$ be a proper map of noetherian schemes, and
  $\sheaf{F}$ be a coherent sheaf on $X$ which is flat over $Y$.  If
  $H^1(X_y, \sheaf{F}_y) = 0$ for all $y\in Y$, then
  \begin{enumerate}[(a)]
  \item $R^1f_* \sheaf{F}  = 0$
  \item $f_*\sheaf{F}$ is a locally free $\sheaf{O}_Y$-module, whose
    formation commutes with arbitrary base change.
  \end{enumerate}
\end{corollary}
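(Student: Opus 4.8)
The plan is to read off the entire corollary from Theorem~\ref{thm:main} by a short downward bootstrap in the cohomological degree, the only non-formal ingredient being a Nakayama argument for the coherent sheaf $R^1f_*\sheaf{F}$.

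For part (a), I start from the observation that the target $H^1(X_y,\sheaf{F}_y)$ of the base change map $\varphi^1(y)$ vanishes by hypothesis, so $\varphi^1(y)$ is trivially surjective. Theorem~\ref{thm:main}(a) then upgrades surjectivity to an isomorphism, whence $R^1f_*\sheaf{F}\tensor_{\sheaf{O}_Y}\kappa(y)=0$ for every $y\in Y$. Since $f$ is proper and $\sheaf{F}$ is coherent, $R^1f_*\sheaf{F}$ is a coherent $\sheaf{O}_Y$-module; a coherent module whose fiber at each point vanishes is itself the zero sheaf, by Nakayama applied to each (finitely generated) stalk $(R^1f_*\sheaf{F})_y$. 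This proves (a).

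For part (b), I bootstrap. Because $R^1f_*\sheaf{F}=0$ is trivially free in a neighborhood of every $y$, and $\varphi^1(y)$ is surjective, the equivalence in Theorem~\ref{thm:main}(b) applied with $p=1$ forces condition (i) there to hold, i.e. $\varphi^0(y)$ is surjective for every $y$. I then invoke Theorem~\ref{thm:main}(b) once more, this time with $p=0$: its standing hypothesis is exactly the surjectivity of $\varphi^0(y)$ just obtained, while condition (i) for $p=0$ asks for the surjectivity of $\varphi^{-1}(y)$, which is automatic since both $R^{-1}f_*\sheaf{F}$ and $H^{-1}(X_y,\sheaf{F}_y)$ vanish in negative degree. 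Hence condition (ii) holds: $f_*\sheaf{F}=R^0f_*\sheaf{F}$ is free near every $y$, that is, locally free. Finally, since the equivalent conditions of (b) for $p=0$ hold for all $y\in Y$, the last clause of Theorem~\ref{thm:main}(b) yields that the formation of $f_*\sheaf{F}$ commutes with arbitrary base change.

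The argument is essentially mechanical once Theorem~\ref{thm:main} is in hand; the one place requiring a separate (standard) input is the passage in (a) from the fiberwise vanishing $R^1f_*\sheaf{F}\tensor_{\sheaf{O}_Y}\kappa(y)=0$ to the global vanishing of the coherent sheaf, which I expect to be the only genuine, if minor, obstacle. The other point to watch is the boundary degree: one must check that the degenerate map $\varphi^{-1}(y)$ is indeed (vacuously) surjective, so that Theorem~\ref{thm:main}(b) can legitimately be applied at $p=0$.
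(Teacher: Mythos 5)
Your proposal is correct and follows essentially the same route as the paper: apply Theorem~\ref{thm:main}(a) at $p=1$ to get fiberwise vanishing, kill the coherent sheaf $R^1f_*\sheaf{F}$ by Nakayama, then run Theorem~\ref{thm:main}(b) first at $p=1$ to obtain surjectivity of $\varphi^0(y)$ and again at $p=0$ using the vacuous surjectivity of $\varphi^{-1}(y)$. The paper additionally sketches an alternative direct verification that $H^0(F^\bullet)$ is free from the Grothendieck complex, but this is only a remark, not a different argument.
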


\begin{proof}
  By theorem~\ref{thm:main}(a) applied with $p=1$,
  $R^1f_*\sheaf{F}\tensor_{\sheaf{O}_Y} \kappa(y) = 0$ for all $y\in
  Y$, and since $R^1f_*\sheaf{F}$ is coherent (Serre's theorem, see
  \cite{EGAIIIa}, théorème 3.2.1, p.116), $R^1f_*\sheaf{F} = 0$ by
  Nakayama's lemma.  Now by theorem~\ref{thm:main}(b) applied with
  $p=1$, we have that $\varphi^0(y)$ is surjective for all $y\in Y$.
  Finally, applying theorem~\ref{thm:main}(b) again with $p=0$
  finishes the proof: $\varphi^{-1}(y)$ is an isomorphism since both
  $R^pf_* \sheaf{F} \tensor_{\sheaf{O}_Y} \kappa (y)$ and $H^p(X_y,
  \sheaf{F}_y)$ vanish for $p=-1$ (alternatively, it is easy to check
  that $H^0(F^\bullet)$ is free directly: since $R^1f_*\sheaf{F} = 0$,
  the Grothendieck complex is exact at $p=1$, and since $\varphi^1$ is
  an isomorphism, $\ker d^1 = \im d^0$ is a direct summand of $F^1$,
  hence free; now the exact sequence $0 \to \ker d^0 \to F^0 \to \im
  d^0 \to 0$ splits, showing that $\ker d^0 = H^0(F^\bullet)$ is
  free).
\end{proof}


\section{Acknowledgments}

This work was supported by CNPq grant 303817/2011-9.  I would also
like to take this opportunity to thank my friend Edmilson Motta for
not letting me forget Master Yoda's perennial wisdom: ``Do.  Or do
not.  There is no try.''

\bibliographystyle{alpha} %other choices are plain or abbrv or alpha
\bibliography{BaseChangeBib}

\end{document}